\newtheorem{theorem}{Theorem}[section]
\newtheorem{corollary}[theorem]{Corollary}
\newtheorem{lemma}[theorem]{Lemma}
\theoremstyle{definition}
\newtheorem{example}[theorem]{Example}
\newtheorem{remark}[theorem]{Remark}
\newenvironment{proofof*}[1]
{\begin{trivlist}\item {\it Proof of {#1}.}}{\end{trivlist}}
\newcommand\bcc{simplicial poset}
\newcommand\dns{discrete normal surface}
\newcommand\ZZ{\mathbb{Z}}
\newcommand\CC{\mathbb{C}}
\title{The average dual surface of a cohomology class and minimal simplicial decompositions of infinitely many lens spaces}
\author{Ed Swartz \thanks{Parially supported by NSF grant DMS-1200478}}
\begin{document}
\maketitle

\begin{abstract}
  Discrete normal surfaces are normal surfaces whose intersection with each tetrahedron of a triangulation has at most one component.  They are also natural Poincar\'e duals to $1$-cocycles with $\ZZ/2\ZZ$-coefficients.  For  a fixed cohomology class in a simplicial poset the average Euler characteristic of the associated \dns s only depends on the $f$-vector of the triangulation.  As an application we determine the minimum simplicial poset representations, also known as crystallizations, of lens spaces $L(2k,q),$ where $2k=qr+1.$  Higher dimensional analogs of \dns s are closely connected to the Charney-Davis conjecture for  flag spheres.  
\end{abstract}

Analyzing compact three-manifolds by cutting them into pieces, in particular tetrahedra, has a long and successful history.  Depending on the author, a ``triangulated three-manifold" can have several different meanings.  At one extreme are abstract simplicial complexes where a face is completely determined by its vertices and a given three-manifold $M$ is triangulated by an abstract simplicial complex $\Delta$ if the geometric representations of $\Delta$ are homeomorphic to $M.$  At the other extreme are the face identification schemes, sometimes called singular triangulations,   most commonly used in modern algorithmic low-dimensional topology.  In a singular triangulation the {\it interiors} of the cells are open simplices.  Giving the closed cells more flexibility may allow one to present $M$ in a very succinct manner.  See, for instance \cite{Mat}.  In between these two  are \bcc s.   Here the {\it closed} cells are simplices, but more than one face can have the same set of vertices.  So in this setting two vertices and two edges are sufficient to triangulate a circle.  A basic result is that any $d$-dimensional closed connected PL-manifold  can be given a \bcc \ triangulation with $d+1$ vertices, the minimum possible  \cite{Pez}, \cite{FG}.  

 In all three cases one of the fundamental  problems is to determine the smallest possible triangulations of a given three-manifold.  
 In \cite{JRT} Jaco, Rubenstein and Tillmann produced the first infinite family of irreducible three-manifolds whose minimal presentation using singular triangulations could be proven. Inspired by their ideas we accomplish the same for \bcc s which are  lens spaces of the form $L(2k,q)$ with $2k=qr+1.$    
 Along the way we will find that for a \bcc \ $\Delta$ the Euler characteristic of the average \dns \ dual to a fixed cohomlogy class $\phi$ in $H^1(\Delta;\ZZ/2\ZZ)$ is independent of $M$ and $\phi.$ It only depends on the $f$-vector of  $\Delta!$  
 
 After setting notation in Section \ref{notation},  the precise meaning of the previous sentence is explained in Section \ref{average}.  Then we show how to use this to prove that minimal simplicial poset representations of $L(2k,q)$ with $2k=qr+1$ have $4(q+r)$ tetrahedra.  Along the way we will see a close connection to the Charney-Davis conjecture for flag spheres.

\section{Notation} \label{notation}

Regular CW-complexes in which all closed cells are combinatorially simplices have appeared under a variety of names.  These include semi-simplicial complexes \cite{EZ}, Boolean cell complexes \cite{BWe}, and simplicial posets \cite{St}.  The last is the most frequent in the combinatorics literature, and since we will be concerned with questions of an enumerative nature we will use it from here on.  In any case, the reader will be well-served with the idea that simplicial posets are analogous to abstract simplicial complexes where a set of vertices may determine more than one face.

Throughout $\Delta$ is a \bcc.    As usual $f_0, f_1, f_2$ and $f_3$ will refer to the number of vertices, edges, triangles and tetrahedra of the complex and in general $f_i$ is the number of $i$-dimensional simplices.   We use $L(p,q)$ to stand for the lens space given by $S^3/(\ZZ/p\ZZ)$ where $S^3 = \{(z_1, z_2) \in \CC^2 :|z_1|=|z_2|=1\}$ with $\ZZ/p\ZZ$ as the $p^{th}$-roots of unity acting by $e^{\frac{2\pi i}{p}} \cdot (z_1, z_2) = (e^{\frac{2\pi i}{p}} z_1, e^{\frac{2q\pi i}{p}} z_2).$

A {\it crystallization} of a three-manifold $M$ without boundary is a \bcc \ with exactly four vertices which is homeomorphic as a topological space to $M.$ A well-known crystallization of $L(p,q)$ is formed by first taking the join of two circles each of which consists of $2p$ vertices and edges,  then quotient out by the $\ZZ/p \ZZ$ action.  We will call this the {\it standard} crystallization of $L(p,q).$ For future reference we observe that the standard crystallization of $L(p,q)$ has $4p$ tetrahedra. 

 Except where otherwise noted, all   chains, cochains and their corresponding homology and cohomology groups will be with $\ZZ/2\ZZ$-coefficients.

A surface $S$ contained in $\Delta$ is {\em normal} if for every tetrahedron $T$ of $\Delta$ each component of $S \cap T$ is combinatorially equivalent to one the three possibilities in Figure \ref{normal surface} (taken  from \cite{JRT}).  Suppose that $S$ is a normal surface such that for each $T$ the intersection $S \cap T$ has at most one component. In \cite{Spr} Spreer calls these types of normal surfaces  {\em discrete normal surfaces}. Given a \dns \ $S$ every triangle  of $\Delta$ intersects $S$ in either $0$ or $2$ edges.  Hence, if we define a function $\psi:C_1(\Delta) \to \ZZ/2 \ZZ$ by $\psi(e)$ is one if $e$ intersects $S,$ and zero otherwise, then $\psi$ is a $1$-cocycle of $\Delta.$  Conversely, if $\psi$ is a $1$-cocycle of $\Delta,$ then we can easily produce $S_\psi$ so that $S_\psi$ is a \dns \ and $\psi$ is the $1$-cocycle given by the previous construction applied to $S_\psi. $ Evidently $S_\psi$ as defined above is unique up to combinatorial equivalence.   We note that $S_\psi$ is a $\ZZ/2 \ZZ$-Poincar\'e dual of $\psi.$  We also note that if $\psi$ is the zero $1$-cocycle, then $S_\psi$ is the empty set which we consider to be a \dns \ of Euler characteristic zero.  

\begin{figure} 
\begin{center}
 \scalebox{0.80}[0.80]{\includegraphics{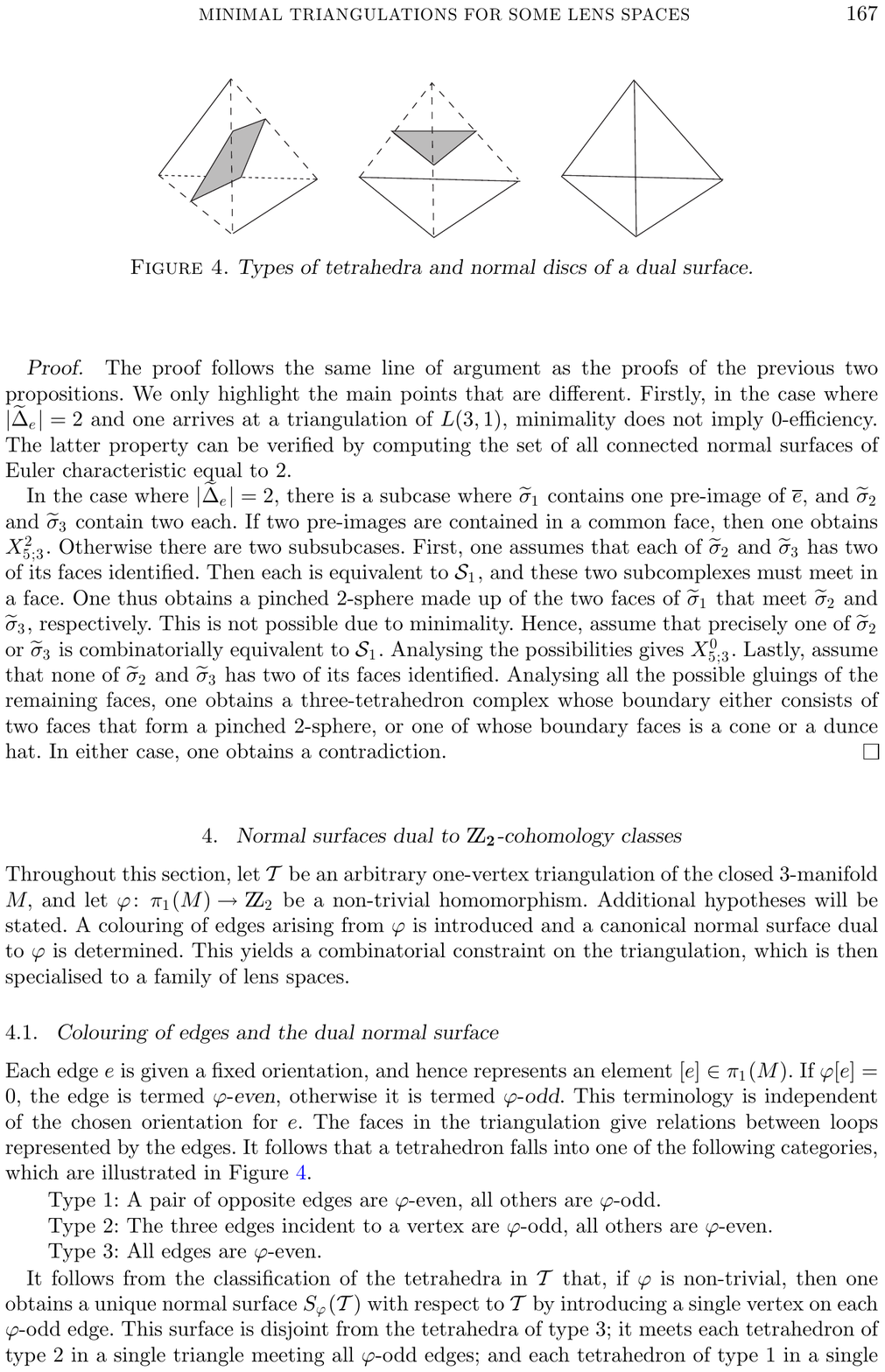}}
 \end{center}
  \caption{Discrete normal surfaces} \label{normal surface}
\end{figure}

Given a discrete normal surface $S=S_\psi$  edges $e$ with $\psi(e)=0$ are called $\psi$-{\it even} edges.  Similarly, edges $e$ such that $\psi(e)=1$ are called $\psi$-{\it odd} edges.  

\section{The average discrete normal surface} \label{average}

For a  one-cocycle $\psi$ let $\Delta_\psi$ be the subcomplex of $\Delta$ obtained by removing  the $\psi$-even edges.  Equivalently, the faces of $\Delta_\psi$ are the faces of $\Delta$ which do not contain any $\psi$-odd edges.  

\begin{lemma}
 Let $\Delta$ be a \bcc \ of dimension $d.$    For any cohomology class $[\phi]$ in $H^1(\Delta)$ the average Euler characteristic of $\Delta_\psi$ for all cocycles $\psi$ in $[\phi]$ is
  $$\displaystyle\sum^d_{j=0}  (-\frac{1}{2})^j f_j = f_0 - (1/2)f_1 + \dots + (-1)^d (1/2^d) f_d.$$
\end{lemma}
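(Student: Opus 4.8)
The plan is to compute the average by a linearity-of-expectation argument over the affine space $[\phi]$ of cocycles, tracking the contribution of each individual face $\sigma$ of $\Delta$ to the expected Euler characteristic. Fix a cocycle $\psi_0 \in [\phi]$; then every cocycle in $[\phi]$ has the form $\psi_0 + \delta\eta$ for a $0$-cochain $\eta$, so I would instead average $\chi(\Delta_\psi)$ over a uniformly random cocycle $\psi \in [\phi]$. By the definition of $\Delta_\psi$, a face $\sigma$ of $\Delta$ (of dimension $j$, hence with $j+1$ vertices and $\binom{j+1}{2}$ edges) survives in $\Delta_\psi$ if and only if $\psi$ vanishes on every edge of $\sigma$. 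Writing $p_\sigma$ for the probability of this event over random $\psi\in[\phi]$, linearity of expectation gives
$$
\mathbb{E}\,\chi(\Delta_\psi) \;=\; \sum_{\sigma \in \Delta} (-1)^{\dim\sigma}\, p_\sigma \;=\; \sum_{j=0}^d (-1)^j \!\!\sum_{\dim\sigma = j}\!\! p_\sigma .
$$
So the whole problem reduces to showing $p_\sigma = (1/2)^j$ for every $j$-face $\sigma$, since then the inner sum is $(1/2)^j f_j$ and we are done.

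The key step, then, is: for a $j$-face $\sigma$, the probability over random $\psi\in[\phi]$ that $\psi$ restricts to $0$ on all edges of $\sigma$ is exactly $2^{-j}$. I would prove this via the restriction map $r_\sigma\colon Z^1(\Delta)\to C^1(\partial\sigma)$ (or rather to the cochains on the edge-set of $\sigma$). The crucial observation is that $\sigma$, being a closed simplex inside the simplicial poset, has trivial first cohomology, so $r_\sigma$ actually maps into the \emph{coboundaries} $B^1(\sigma)$, which form a subspace of dimension $j$ (it is $C^0(\sigma)/\text{constants}$, and $\sigma$ has $j+1$ vertices). I claim $r_\sigma$ is surjective onto $B^1(\sigma)$: given a $0$-cochain $\eta_\sigma$ on the $j+1$ vertices of $\sigma$, extend it arbitrarily to a $0$-cochain $\eta$ on all of $\Delta$; then $\psi_0 + \delta\eta$ is a cocycle in $[\phi]$ whose restriction to $\sigma$ differs from $r_\sigma(\psi_0)$ by $\delta\eta_\sigma$, and as $\eta_\sigma$ ranges over $C^0(\sigma)$ this hits every coset, so in particular $-r_\sigma(\psi_0)$ is attained and $r_\sigma$ is surjective onto $B^1(\sigma)$ as the coset $[\phi]$ varies — more cleanly, the map $\eta\mapsto r_\sigma(\psi_0+\delta\eta)$ from $C^0(\Delta)$ to $B^1(\sigma)$ is surjective because it factors through $\eta\mapsto\eta|_\sigma$ followed by an affine translate of $\delta_\sigma$. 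Surjectivity of an affine map between finite $\ZZ/2\ZZ$-vector spaces means its fibers all have equal size, so the preimage of the single element $0\in B^1(\sigma)$ has measure $1/|B^1(\sigma)| = 2^{-j}$. Hence $p_\sigma = 2^{-j}$.

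There is one bookkeeping point to handle carefully: the averaging is over cocycles $\psi$, but two distinct $0$-cochains $\eta, \eta'$ give the same cocycle $\psi_0+\delta\eta = \psi_0+\delta\eta'$ exactly when $\eta-\eta'$ is locally constant, i.e. constant on each connected component of $\Delta$. So "random $\psi \in [\phi]$'' should be realized as "random $\eta \in C^0(\Delta)$'' pushed forward along $\eta\mapsto\psi_0+\delta\eta$; since this pushforward is uniform on $[\phi]$ (each cocycle has the same number $2^{c}$ of preimages, $c$ the number of components), averaging over $\eta$ and over $\psi$ give the same answer, and the computation above — which I would phrase directly in terms of the affine-linear map $C^0(\Delta)\to B^1(\sigma)$, $\eta\mapsto r_\sigma(\psi_0+\delta\eta)$ — goes through unchanged.

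I expect the main obstacle to be stating the surjectivity/equal-fiber argument cleanly enough that it is manifestly coordinate-free and handles the empty-surface ($\psi=0$) and disconnected cases without special pleading; the underlying linear algebra over $\ZZ/2\ZZ$ is routine once the right maps are named, and the input "a simplex has no first cohomology, so every cocycle restricts to a coboundary on it'' is elementary. Everything else is the linearity-of-expectation reorganization, which is immediate.
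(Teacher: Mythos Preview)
Your proposal is correct and follows essentially the same route as the paper: both average over $C^0(\Delta)$ (noting the $2^c$-to-one pushforward to $[\phi]$ is uniform), swap the order of summation to a sum over faces, and use the acyclicity of each simplex $\sigma$ to show that a $j$-face survives with probability $2^{-j}$. Your presentation is phrased in expectation/surjective-affine-map language while the paper does a direct count (``fix $u'$ with $\sigma+\delta^0(u')$ zero on $F$, then re-index''), but the underlying argument is identical.
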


\begin{proof}
Let $n$ be the number of vertices of $\Delta$ and $c$ the number of components of $\Delta.$ Choose a representative $1$-cocycle $\sigma \in [\phi].$     Consider
\begin{equation} \label{Z}
Z = \sum_{u \in C^0(\Delta)} \chi(\Delta_{\sigma + \delta^0(u)}).
\end{equation} 
\noindent The coboundary map $\delta^0:C^0 \to C^1$ has a $c$-dimensional kernel, so $Z$ counts each cocycle in $[\phi] \ 2^c$-times.  However,  this has no effect on the fact that the average value of $\chi(\Delta_\psi)$ is $Z/2^n.$  

Fix a face $F$ of $\Delta.$  What is the contribution of $F$ to $Z?$ By definition, it is $ (-1)^{\dim F}M_F,$ where $M_F$ is the number of $u \in C^0$ such that all of the edges of $F$ evaluate to zero in $\sigma+\delta^0(u).$ Since $F$ is a simplex any cocycle is acyclic when restricted to $F.$  Hence there are $u' \in C^0$ so that $\sigma + \delta^0(u')$ is zero on the edges of $F.$  By fixing such a $u'$ and viewing (\ref{Z}) as a sum over $\chi(\Delta_{\sigma+\delta^0(u'+u)}),$ we see that $M_F = 2 \cdot 2^{n-(\dim F +1)}.$  Rewriting (\ref{Z}) as a sum over all faces gives the desired result.

\end{proof}

Formula (\ref{psi to delta})  in the following corollary is  from \cite{JRT}.

\begin{corollary}  \label{average ec}
  Let $\Delta$ be a  \bcc \ whose geometric realization is a closed three-manifold.    If $[\phi] \in H^1(\Delta)$, then the average of $\chi(S_\psi)$ over all cocycles $\psi$ in $[\phi]$ is 
  $$\frac{5 f_0 - f_1}{8}=\frac{4f_0-f_3}{8}.$$
\end{corollary}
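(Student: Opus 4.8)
The plan is to deduce this from the Lemma by specializing to $d=3$ and then using the topological constraints coming from the fact that $\Delta$ is a closed three-manifold. Applying the Lemma with $d=3$, the average Euler characteristic of $\Delta_\psi$ over cocycles $\psi \in [\phi]$ is $f_0 - \tfrac12 f_1 + \tfrac14 f_2 - \tfrac18 f_3$. So the first task is to relate $\chi(\Delta_\psi)$ to $\chi(S_\psi)$, and the second is to simplify the resulting expression using the Dehn–Sommerville–type relations valid for a closed three-manifold, namely $f_0 - f_1 + f_2 - f_3 = 0$ and $2f_2 = 4f_3$ (each triangle lies in exactly two tetrahedra). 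These two identities let one eliminate $f_2$ and $f_3$ in favor of $f_0$ and $f_1$, which is exactly the shape of the claimed answer, and also give the alternate form $\tfrac{4f_0 - f_3}{8}$.

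The substantive step is the first one: understanding how $S_\psi$ sits inside the complement of $\Delta_\psi$, or more precisely relating $\chi(S_\psi)$ to $\chi(\Delta_\psi)$ face by face. The surface $S_\psi$ is built from normal disks inside the tetrahedra, and the combinatorial type of $S_\psi \cap T$ (triangle or quadrilateral, per Figure \ref{normal surface}) is determined by which edges of $T$ are $\psi$-odd. I would set up a local bookkeeping: for each tetrahedron $T$, compare the contribution of the normal piece $S_\psi \cap T$ (a disk, with its induced cell structure coming from the edges of $S_\psi$) to $\chi(S_\psi)$ against the contribution of the faces of $T$ surviving in $\Delta_\psi$ (i.e. the faces of $T$ all of whose edges are $\psi$-even) to $\chi(\Delta_\psi)$. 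Since this is a purely local computation depending only on which of the six edges of $T$ are odd, and since the odd edges of any $T$ form the support of a $1$-cocycle restricted to $T$ (hence a cut, i.e. the edge set of a complete bipartite subgraph on the $4$ vertices), there are only a few cases to check — the empty cut, a single edge, a pair of opposite edges (impossible for a cut on $4$ vertices in this $\ZZ/2$ sense? no — $\{1,2\}$ vs $\{3,4\}$ gives opposite edges, that is a cut), and a "star" of three edges at one vertex. Carrying out this case analysis should show that, after summing over all faces of all dimensions, $\chi(S_\psi)$ and $\chi(\Delta_\psi)$ differ by a fixed linear combination of the $f_i$, independent of $\psi$; in fact the cleanest route may be to establish Formula (\ref{psi to delta}) from \cite{JRT}, which expresses $\chi(S_\psi)$ directly in terms of $\psi$-data, and observe that its $\psi$-average is computed by the Lemma.

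The main obstacle I expect is getting the local count in each tetrahedron exactly right, including the contribution of lower-dimensional faces (vertices and edges of $\Delta$) that are shared among several tetrahedra and triangles — one must be careful not to double-count, which is why organizing the whole argument as a single sum over faces of $\Delta$ (as in the Lemma's proof) rather than tetrahedron-by-tetrahedron is preferable. Once the identity $\chi(S_\psi) = \chi(\Delta_\psi) + (\text{linear in the } f_i)$ is in hand, taking averages and invoking the Lemma together with $f_0 - f_1 + f_2 - f_3 = 0$ and $f_2 = 2f_3$ finishes the computation; the two displayed forms $\tfrac{5f_0 - f_1}{8}$ and $\tfrac{4f_0 - f_3}{8}$ are then interchangeable via these same relations.
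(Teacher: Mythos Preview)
Your overall plan---invoke the Lemma for $d=3$, relate $\chi(S_\psi)$ to $\chi(\Delta_\psi)$, then simplify via the relations $f_3=f_1-f_0$ and $f_2=2(f_1-f_0)$---is exactly the paper's. The difference is in how the middle step is executed.

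The paper handles it topologically in one line. Formula~(\ref{psi to delta}) is not a combinatorial identity imported from \cite{JRT}; it is the chain
\[
2\chi(S_\psi)=2\chi(N_0)=\chi(\partial N_0)=\chi(\partial N_1)=2\chi(N_1)=2\chi(\Delta_\psi),
\]
where $N_0$ is a regular neighborhood of $S_\psi$ and $N_1$ its complement. The only input is that $N_1$ deformation retracts onto $\Delta_\psi$ and that each $N_i$ is a compact $3$-manifold with boundary, so $\chi(\partial N_i)=2\chi(N_i)$. This gives $\chi(S_\psi)=\chi(\Delta_\psi)$ on the nose, with no correction term.

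Your combinatorial route also works, and is cleaner than you anticipate. The cocycle condition forces each triangle to have $0$ or $2$ odd edges and each tetrahedron to have $0$, $3$, or $4$ odd edges (your list of cases is off: a single odd edge or a pair of opposite odd edges never arises, since the odd set on a simplex is a vertex cut). Hence the $i$-faces of $\Delta$ \emph{not} in $\Delta_\psi$ are, for $i=1,2,3$, in bijection with the $(i-1)$-cells of $S_\psi$. Summing with signs gives
\[
\chi(\Delta)=\chi(\Delta_\psi)-\chi(S_\psi),
\]
and since $\chi(\Delta)=0$ for a closed $3$-manifold you again get $\chi(S_\psi)=\chi(\Delta_\psi)$ exactly. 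So no tetrahedron-by-tetrahedron case analysis or double-counting worries are needed; the anticipated linear-in-$f_i$ discrepancy is just $\chi(\Delta)=0$. Either argument, followed by the Lemma and the face relations you listed, yields $\frac{5f_0-f_1}{8}=\frac{4f_0-f_3}{8}$.
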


\begin{proof}
  Let $\psi$ be a cocycle in $[\phi]$  and let $N_1$ be the complement of a small regular neighborhood $N_0$ of $S_\psi$ in $\Delta.$   Then $N_1$  is homotopy equivalent to $\Delta_\psi$.  Hence,
  \begin{equation} \label{psi to delta}
  2 \chi(S_\psi) = 2 \chi(N_0) =  \chi(\partial N_0) =\chi(\partial N_1) = 2 \chi ( N_1) = 2 \chi (\Delta_\psi).
  \end{equation}
  By the previous lemma the average Euler characteristic of the $\Delta_\psi$, and hence the \dns s $S_\psi,$ is $f_0-(1/2) f_1 + (1/4)f_2 - (1/8) f_3.$    Apply the well-known formulas $f_3 = f_1 - f_0$ and $ f_2 = 2 (f_1 - f_0).$ 
    \end{proof}
    
    \begin{remark}
    An $f$-vector formula for the average Euler characteristic of $S_\psi$ when $\Delta$ is a  three-dimensional normal pseudomanifold can be obtained by using the fact that \\ $\chi(\partial N_1) = 2 \chi(N_1) - 2 \chi(\Delta).$
    \end{remark}
    
    \begin{remark}
    In  higher odd dimensions the reasoning of the proof of Corollary \ref{average ec} carries through without change to the combinatorial slicings of \cite{Spr} (see also \cite{Kuh}) as they are the analogs of \dns s corresponding to cocycles cohomologuous to zero in $H^1(\Delta).$  In particular, the Charney-Davis conjecture \cite{CD} for flag PL-spheres is equivalent to the statement that the average Euler characteristic of a combinatorial slicing is greater (in dimensions congruent to one mod four) or less (in dimensions congruent to three mod four) than two.     
    \end{remark}
    
       As an example of possible applications of Corollary \ref{average ec} we consider a simple example.    
    \begin{example}
    Let $\Delta$ be the boundary of an $11$-vertex two-neighborly $4$-polytope.    In this case $5 f_0 - f_1  = (55 - 55)/8=0.$ In addition, each vertex link occurs twice as a discrete normal surface and has Euler characteristic two.  Hence  $\Delta$ must contain a \dns \ $S$ with negative Euler characteristic.  As nonorientable closed surfaces do not embed in the three-sphere, $S$ must be orientable with genus at least two.  
         \end{example}

Our other application of Corollary \ref{average ec} is to determine the size of a minimal crystallization of $L(2k,q)$ whenever $2k = rq+1.$  In other words, $2k-1=qr,$ with $ q$ and $r$ odd positive integers.  In preparation, we recall Bredon and Wood's main result concerning which nonorientable surfaces embed in $L(2k,q).$  The following theorem is implied by   \cite[Theorem 6.1]{BW}.

\begin{theorem} \cite{BW} \label{embed}
Assume that $2k=qr+1, \ q$ and $r$ odd positive integers.  Then a closed nonorientable  surface $S$ embeds in $L(2k,q)$ if and only if its Euler characteristic is $\frac{4-q-r}{2}-2i,$ with $i$ a nonnegative  integer. 
\end{theorem}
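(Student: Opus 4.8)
The plan is to obtain this as a direct consequence of Bredon and Wood's classification of embedded surfaces in lens spaces \cite[Theorem 6.1]{BW}, rather than to reprove it from scratch. Recall that, since $L(2k,q)$ is orientable, every closed nonorientable surface in it is one-sided, and --- because $H_2(L(2k,q);\ZZ/2\ZZ)\cong\ZZ/2\ZZ$ --- every such surface represents the unique nonzero class, so the embedded nonorientable surfaces form a single family ordered by genus. Bredon and Wood determine exactly which members occur: there is an integer $N=N(2k,q)$, the minimal nonorientable genus (crosscap number), and a closed nonorientable surface embeds in $L(2k,q)$ if and only if its genus lies in $\{N,N+2,N+4,\dots\}$ --- the genus can only be raised in steps of two, since one may ambiently connect-sum with a small unknotted torus but never with a projective plane. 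As $\chi=2-g$ for a nonorientable surface of genus $g$, the embeddable Euler characteristics are exactly $\{\,2-N-2i:i\ge 0\,\}$, so the theorem is equivalent to the single arithmetic identity
\[
N(2k,q)=\frac{q+r}{2}\qquad\text{whenever }\ 2k=qr+1\ \text{ with }\ q,r\ \text{odd};
\]
indeed $q+r$ is then even and $2-\tfrac{q+r}{2}=\tfrac{4-q-r}{2}$, matching the stated formula.

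It then remains to evaluate Bredon and Wood's recursion for $N(2k,q)$ at $p=2k=qr+1$. That recursion unwinds the continued-fraction data of the pair $(p,q)$ --- equivalently, it runs the Euclidean algorithm on $(p,q)$ --- with the contribution of each step governed by the parity of the partial quotient read off at that step. The hypothesis $p\equiv 1\pmod q$ forces this data to be as short as possible: $p/q=r+\tfrac1q=[\,r;q\,]$ has exactly two partial quotients, and the hypothesis that $q$ and $r$ are both odd is precisely what makes each of the two steps contribute with no correction term. I would carry the recursion through these two steps, confirm that it terminates at the base case, and check that the two contributions sum to $\tfrac{r}{2}+\tfrac{q}{2}$. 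As numerical sanity checks I would verify $N(2,1)=1$ (realized by $\mathbb{RP}^2\subset\mathbb{RP}^3$), $N(4,1)=2$ (the Klein bottle in $L(4,1)$), $N(10,3)=3$, and $N(16,3)=N(16,5)=4$, using also that the formula $\tfrac{q+r}{2}$ is symmetric in $q$ and $r$, which reflects the homeomorphism $L(2k,q)\cong L(2k,r)$ (since $r\equiv -q^{-1}\pmod{2k}$).

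The main obstacle is bookkeeping rather than geometry: \cite[Theorem 6.1]{BW} is stated in greater generality and with Bredon and Wood's own normalizations for $L(p,q)$, for one-sidedness, and for which of the two twist parameters plays the role of $q$, so the real work is to match that statement to the conventions fixed in Section \ref{notation} and then to verify that, after the translation, the parity constraint in \cite{BW} produces exactly the arithmetic progression $\chi(S)=\tfrac{4-q-r}{2}-2i$, with no intermediate Euler characteristics appearing and none omitted. As an independent check on the endpoints of the argument, the inequality $N\le\tfrac{q+r}{2}$ can be seen by exhibiting a one-sided surface of that genus directly from the genus-one Heegaard splitting of $L(2k,q)$, while for the reverse inequality in the smallest cases one uses irreducibility of lens spaces: a projective plane or Klein bottle of too-small genus would split off an $\mathbb{RP}^3$ summand, which is impossible.
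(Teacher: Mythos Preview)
Your approach is exactly the paper's: the paper gives no proof of this theorem at all, merely stating that it is ``implied by \cite[Theorem~6.1]{BW}.'' Your proposal to extract the statement from Bredon--Wood's general recursion, check the arithmetic $N(2k,q)=\tfrac{q+r}{2}$ via the short continued fraction $[r;q]$, and verify the parity constraint on genera is precisely the derivation the paper leaves implicit, and your sanity checks and the symmetry observation $L(2k,q)\cong L(2k,r)$ are correct.
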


\begin{theorem}  \label{lens complexity}
Suppose $2k=qr+1,$ with $q$ and $r$ positive odd integers.  Any minimal crystallization of $L(2k,q)$ has $4(q+r)$ tetrahedra.
\end{theorem}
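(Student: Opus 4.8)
The plan is to prove a matching lower and upper bound. The upper bound is the standard crystallization of $L(2k,q)$, which has $4\cdot 2k = 8k$ tetrahedra; but $8k=4(qr+1)$, which is generally much larger than $4(q+r)$, so the standard crystallization is \emph{not} minimal here and a smaller construction must be produced by hand. So first I would construct an explicit crystallization of $L(2k,q)$ with exactly $4(q+r)$ tetrahedra. A natural approach: since $2k-1=qr$ with $q,r$ odd, try to build $\Delta$ as a join-type or layered construction whose two "circle" factors have $2q$ and $2r$ vertices rather than $2(2k)$, exploiting the arithmetic $2k\equiv 1 \pmod q$ and $\pmod r$ to realize the correct $\ZZ/2k\ZZ$ identification; one expects the resulting complex to have $f_3 = 4(q+r)$ and $f_0=4$. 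Verifying that this complex is indeed homeomorphic to $L(2k,q)$ (e.g. by computing the fundamental group or identifying it with a known small triangulation from the literature, cf.\ \cite{JRT}) is the routine-but-fiddly part of the upper bound.

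For the lower bound I would argue by contradiction: suppose $\Delta$ is a crystallization of $L(2k,q)$ with $f_3 < 4(q+r)$, i.e.\ $f_3 \le 4(q+r)-1$. Since $\Delta$ is a crystallization, $f_0 = 4$, so Corollary~\ref{average ec} gives that the average of $\chi(S_\psi)$ over all cocycles $\psi$ in any fixed class $[\phi] \in H^1(\Delta)$ equals $\frac{4f_0 - f_3}{8} = \frac{16 - f_3}{8}$. Now pick a \emph{nonzero} cohomology class $[\phi]$; such a class exists because $H^1(L(2k,q);\ZZ/2\ZZ) \cong \ZZ/2\ZZ$ (as $2k$ is even), and for $\phi = 0$ the surface $S_0$ is empty with $\chi = 0$, so restricting to the nonzero class is where the topology bites. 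Among the cocycles in this nonzero class the average Euler characteristic is $\frac{16-f_3}{8}$, hence \emph{some} cocycle $\psi$ in $[\phi]$ has $\chi(S_\psi) \le \frac{16-f_3}{8}$.

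The crux is then to combine this with the embedding restriction from Theorem~\ref{embed}. Here $S_\psi$ is a closed surface embedded in $L(2k,q)$; I need to know it is nonorientable. This should follow because $[\phi] \ne 0$ forces $S_\psi$ to be non-separating (it is Poincar\'e dual to a nonzero class), and a non-separating surface in a rational homology sphere — which $L(2k,q)$ is, since $H_*(\,\cdot\,;\Q)$ is that of $S^3$ — must be nonorientable (an orientable non-separating surface would give a nonzero class in $H_2(L(2k,q);\Z)=0$, contradiction). So Theorem~\ref{embed} applies: $\chi(S_\psi) = \frac{4-q-r}{2} - 2i$ for some integer $i \ge 0$, in particular $\chi(S_\psi) \le \frac{4-q-r}{2}$. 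Combining $\frac{4-q-r}{2} \le \frac{16-f_3}{8}$ would give $f_3 \le 16 - 4(4-q-r) = 4(q+r)$, which is \emph{not} the strict inequality I want — so a slightly sharper argument is needed. The fix: the average over the nonzero class is $\frac{16-f_3}{8}$, but every $S_\psi$ in that class has $\chi$ a value in the arithmetic progression $\{\frac{4-q-r}{2} - 2i\}$ \emph{or} could have larger $\chi$ only if it is orientable — but we just showed all of them are nonorientable — so in fact \emph{every} term of the average is $\le \frac{4-q-r}{2}$, whence the average itself satisfies $\frac{16-f_3}{8} \le \frac{4-q-r}{2}$, giving $f_3 \ge 4(q+r)$. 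That is exactly the desired bound; equality with the construction finishes the proof.

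The step I expect to be the main obstacle is the explicit construction realizing $4(q+r)$ tetrahedra together with the clean proof that it is $L(2k,q)$: getting the identification scheme right so the fundamental group comes out as $\ZZ/2k\ZZ$ with the correct torsion linking form (to pin down $q$ rather than some other $q'$ with $qq'\equiv \pm1$) is delicate. The lower bound, by contrast, is essentially a one-line consequence of Corollary~\ref{average ec} once the non\-orientability of $S_\psi$ for a nonzero class is nailed down, and that in turn is forced by $L(2k,q)$ being a $\Q$-homology sphere.
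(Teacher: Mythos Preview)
Your lower-bound strategy matches the paper's: take the nontrivial class $[\phi]\in H^1(\Delta;\ZZ/2\ZZ)$, use Corollary~\ref{average ec} to relate $f_3$ to the Euler characteristics of the $S_\psi$, and bound those via Bredon--Wood. The paper phrases it dually (pick one $\psi$ with $\chi(S_\psi)\ge$ average and bound that single $\chi$), which is equivalent to your ``every $S_\psi$ has $\chi\le\frac{4-q-r}{2}$'' once that uniform bound is established. For the upper bound the paper simply cites the constructions of Casali--Cristofori \cite{CC} and Basak--Datta \cite{BD} rather than building the crystallization by hand.

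There is, however, a real gap in your argument. You treat $S_\psi$ as a connected surface and invoke Theorem~\ref{embed} to get $\chi(S_\psi)=\frac{4-q-r}{2}-2i$, but $S_\psi$ can be disconnected, and Bredon--Wood constrains only its connected nonorientable components. The danger is an orientable component: a sphere would add $+2$ to $\chi(S_\psi)$ and wreck the inequality. The paper addresses this explicitly, showing that a sphere component forces $\Delta_\psi$ to be disconnected and then using a combinatorial fact special to crystallizations (the map $H_1(X)\to H_1(\Delta)$ is surjective for the subcomplex $X$ spanned by any two of the four vertices, cf.\ \cite{Klee}) to deduce that $\psi$ would be cohomologically trivial. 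In fact your own non-separating observation gives a cleaner fix you did not carry through: since $H_2(L(2k,q);\ZZ)=0$, any orientable component of $S_\psi$ is null-homologous and therefore separates $L(2k,q)$, and removing the rest of $S_\psi$ from the two pieces leaves both nonempty, so the whole of $S_\psi$ would separate --- contradicting $[S_\psi]\ne 0$. Hence $S_\psi$ has \emph{no} orientable components at all; each nonorientable component has $\chi\le\frac{4-q-r}{2}\le 0$ (the latter because $k\ge 2$), and summing gives $\chi(S_\psi)\le\frac{4-q-r}{2}$. With that repair your argument goes through and is arguably more direct than the paper's route via \cite{Klee}.
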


\begin{proof}
 The theorem is well-known for $k=1,$ so we assume $k \ge 2.$  Let $\Delta$ be a minimal crystallization of $L(2k,q).$  The recent preprints by Casali and Cristofori \cite{CC} and (independendly) Basak and Datta \cite{BD}  produce crystallizations of $L(2k,q)$ with $4(q+r)$ facets. Hence,  $f_3(\Delta) \le 4(q+r).$  So it remains to prove the reverse inequality.    Let $\phi$ be  the nontrivial element of $H^1(\Delta).$ Applying Corollary \ref{average ec} we can find a cocycle $\psi$ in $[\phi]$ such that 
\begin{equation} \label{poset inequality}
4 f_0 - f_3 \le 8 \chi(S_\psi).
\end{equation}
    Since $H_2(L(2k,q); \ZZ)$ is zero $S_\psi$ is not orientable.  By Theorem \ref{embed} the Euler characteristic of the nonorientable components of $S_\psi $ sum to at most $(4-q-r)/2.$  What about orientable components?  Except for spheres, these components do not increase the Euler characteristic of $S_\psi.$ We claim that $S_\psi$ has no sphere components.  Suppose $S_\psi$ has a sphere component. Then $\Delta - S_\psi$ has at least two components, so $\Delta_\psi$ also has at least two components.  That implies that there exist vertices $v_1, v_2$ so that $\psi(e) = 1$ for all edges $e$ between $v_1$ and $v_2.$   Let $X$ be the subcomplex of $\Delta$ spanned by $v_1$ and $v_2.$  The natural inclusion map from $H_1(X) \to  H_1(\Delta)$ is surjective.  See, for instance, \cite{Klee}.  Of course, this is impossible since $\psi$ is nontrivial in cohomology but evaluates to zero on all generators of $H_1(X).$

Since $\chi(S_\psi) \le (4-q-r)/2,$ equation (\ref{poset inequality}) implies that $16-f_3 \le 4(4-q-r)$ so $f_3 \ge 4(q+r)$ as required.

 \end{proof}
\noindent  When $q=1$ and $r=2k-1$ the standard crystallization of $L(2k,1)$ has the desired number of tetrahedra.  In other cases more sophistication is required.  See  \cite{BD} and \cite{CC}.

\begin{remark}
Using 1-dipole moves and connected sum with a balanced sphere whose $h$-vector is $(1,0,2,0,1)$ it is possible to show that determining the minimal crystallization of a closed $3$-manifold $M$ is equivalent to determining all possible $f$-vectors of balanced simplicial posets homeomorphic to $M.$  See \cite{FG} for an explanation of dipole moves, \cite{St} for a balanced sphere with $h$-vector $(1,0,2,0,1)$,  and \cite{Klee} for a recent definition of balanced simplicial poset.  
\end{remark}

\noindent {\it Acknowledgements} \\
Previous versions of the proof of Corollary \ref{average ec} made it clear that this work was originally guided by \cite{JRT}. The author became aware of \cite{JRT} as a result of Stephan Tillmann's lecture at Oberwolfach's Triangulations workshop in 2012.   Later correspondence with Jonathan Browder and Jonathan Spreer eventually inspired the current presentation.

 \small{

}

\end{document}